\providecommand{\U}[1]{\protect \rule{.1in}{.1in}}
\newtheorem{theorem}{Theorem}[section]
\theoremstyle{definition}
\newtheorem{definition}[theorem]{Definition}
\theoremstyle{remark}
\numberwithin{equation}{section}
\begin{document}
\title[Quaternionic Mannheim curves in Euclidean space $E^{4}$]{Characterizations of the Quaternionic Mannheim curves in Euclidean space
$E^{4}$}
\author{O. Zek\.{I} Okuyucu}
\address{Department of Mathematics, University of Bilecik
\c{S}eyh Edebali, Turkey }
\email{osman.okuyucu@bilecik.edu.tr}
\subjclass[2010]{11R52; 53A04.}
\keywords{Quaternion algebra, Mannheim curve, Euclidean space}

\begin{abstract}
In \cite{matsuda}, \textit{Matsuda and Yorozo} obtained that Mannheim curves
in 4-dimensional Euclidean space. In this study, we define a
\textbf{quaternionic Mannheim curve} and we give some characterizations of
them in Euclidean $3-$space and $4-$space.

\end{abstract}
\maketitle

\section{Introduction}

The geometry of curves in a Euclidean space have been developed a long time
ago and we have a deep knowledge about it. In the theory of curves in
Euclidean space, one of the important and interesting problem is
characterizations of a regular curve. We can characterize some curves via
their relations between the Frenet vectors of them. For instance Mannheim
curve is a special curve and it is characterized using the Frenet vectors of
its Mannheim curve couple.

In 2007, the definition of Mannheim curves in Euclidean 3-space is given by H.
Liu and F. Wang \cite{liu} with the following:

\begin{definition}
Let $\alpha$ and $\beta$ be two curves in Euclidean 3-space If there exists a
corresponding relationship between the space curves $\alpha$ and $\beta$ such
that, at the corresponding points of the curves, the principal normal lines of
$\alpha$ coincides with the binormal lines of $\beta$, then $\alpha$ is called
a Mannheim curve, and $\beta$ is called a Mannheim partner curve of $\alpha$.
\end{definition}

In their paper, they proved that a given curve is a Mannheim curve if and only
if then for $\lambda \in \mathbb{R}$ it has $\lambda \left(  \varkappa^{2}%
+\tau^{2}\right)  =\varkappa$ curve Also in 2009, Matsuda and Yorozo, in
\cite{matsuda}, defined generalized Mannheim curves in Euclidean 4-space. If
the first Frenet vector at each point of $\alpha$ is included in the plane
generated by the second Frenet vector and the third Frenet vector of $\beta$
at corresponding point under a bijection, which is from $\alpha$ to $\beta$.
Then the curve $\alpha$ is called generalized Mannheim curve and the curve
$\beta$ is called generalized Mannheim mate curve of $\alpha$. And they gave a
theorem such that if the curve $\alpha$ is a generalized Mannheim curve in
Euclidean 4-space, then the first curvature function $k_{1}$ and second
curvature functions $k_{2}$ of the curve $\alpha$ satisfy the equality:%
\[
k_{1}(s)=\mu \left \{  \left(  k_{1}(s)\right)  ^{2}+\left(  k_{2}(s)\right)
^{2}\right \}
\]
where $\mu$ is a positive constant number.

The quaternion was introduced by Hamilton. His initial attempt to generalize
the complex numbers by introducing a three-dimensional object failed in the
sense that the algebra he constructed for these three-dimensional object did
not have the desired proporties. On the 16th October 1843 Hamilton discovered
that the appropriate generalization is one in which the scalar(real) axis is
left unchanged whereas the vector(imaginary) axis is supplemented by adding
two further vectors axes.

In 1987, The Serret-Frenet formulas for a quaternionic curve in $E^{3}$ and
$E^{4}$ was defined by Bharathi and Nagaraj \cite{nag} and then in 2004,
Serret-Frenet formulas for quaternionic curves and quaternionic inclined
curves have been defined in Semi-Euclidean space by \c{C}\"{o}ken and Tuna in
2004 \cite{coken}.

In 2011 G\"{u}ng\"{o}r and Tosun studied quaternionic rectifying curves
\cite{gungor}. Also, G\"{o}k et.al \cite{gok1,ferda} defined a new kind of
slant helix in Euclidean space $E^{4}$ and semi-Euclidean space $E_{2}^{4}.$
It called quaternionic $B_{2}$-slant helix in Euclidean space $E^{4}$ and
semi-real quaternionic $B_{2}$-slant helix in semi-Euclidean space $E_{2}%
^{4},$ respectively$.$ Recently, Sa\u{g}lam, in \cite{saglam}, has studied on
the osculating spheres of a real quaternionic curve in Euclidean $4-$space.

In this study, we define quaternionic Mannheim curves\textbf{\ }and we give
some characterizations of them in Euclidean $3$ and $4$ space.

\section{Preliminaries}

Let $Q_{H}$ denotes a four dimensional vector space over the field $H$ of
characteristic grater than 2. Let $e_{i}$ $(1\leq i\leq4)$denote a basis for
the vector space. Let the rule of multiplication on $Q_{H}$ be defined on
$e_{i}$ $(1\leq i\leq4)$ and extended to the whole of the vector space by
distributivity as follows:

A real quaternion is defined with $q=a\overrightarrow{e_{1}}+b\overrightarrow
{e_{2}}+c\overrightarrow{e_{3}}+de_{4}$ where $a,b,c,d$ are ordinary numbers.
Such that%
\begin{equation}%
\begin{array}
[c]{cc}%
e_{4}=1, & e_{1}^{2}=e_{2}^{2}=e_{3}^{2}=-1,\\
e_{1}e_{2}=e_{3}, & e_{2}e_{3}=e_{1},\text{ }e_{3}e_{1}=e_{2},\\
e_{2}e_{1}=-e_{3}, & e_{3}e_{2}=-e_{1},\text{ }e_{1}e_{3}=-e_{2}.
\end{array}
\label{2-1}%
\end{equation}
If we denote $S_{q}=d$ and $\overrightarrow{V_{q}}=a\overrightarrow{e_{1}%
}+b\overrightarrow{e_{2}}+c\overrightarrow{e_{3}}$, we can rewrite real
quaternions the basic algebraic form $q=S_{q}+\overrightarrow{V_{q}}$ where
$S_{q}$ is scalar part of $q$ and $\overrightarrow{V_{q}}$ is vectorial part.
Using these basic products we can now expand the product of two quaternions to
give%
\begin{equation}
p\times q=S_{p}S_{q}-\langle \overrightarrow{V_{p}},\text{ }\overrightarrow
{V_{q}}\rangle+S_{p}\overrightarrow{V_{q}}+S_{q}\overrightarrow{V_{p}%
}+\overrightarrow{V_{p}}\wedge \overrightarrow{V_{q}}\text{ for every }p,q\in
Q_{H}, \label{2-2}%
\end{equation}
where we have use the inner and cross products in Euclidean space $E^{3}$
\cite{nag}$.$ There is a unique involutory antiautomorphism of the quaternion
algebra, denoted by the symbol $\gamma$ and defined as follows:%

\[
\gamma q=-a\overrightarrow{e_{1}}-b\overrightarrow{e_{2}}-c\overrightarrow
{e_{3}}+de_{4}\text{ for every }q=a\overrightarrow{e_{1}}+b\overrightarrow
{e_{2}}+c\overrightarrow{e_{3}}+de_{4}\in Q_{H}%
\]
which is called the \textquotedblleft Hamiltonian
conjugation\textquotedblright. This defines the symmetric, real valued,
non-degenerate, bilinear form $h$ are follows:%

\[
h(p,\text{ }q)=\frac{1}{2}\left[  \text{ }p\times \gamma q+q\times \gamma
p\right]  \text{ for }p,q\in Q_{H}.
\]
And then, the norm of any $q$ real quaternion denotes
\begin{equation}
\left \Vert q\right \Vert ^{2}=h(q,q)=q\times \gamma q. \label{2-3}%
\end{equation}

The concept of a spatial quaternion will be used of throughout our work. $q$
is called a spatial quaternion whenever $q+\gamma q=0.$ $\left[  2\right]  .$

The Serret-Frenet formulae for quaternionic curves in $E^{3}$ and $E^{4}$ are follows:

\begin{theorem}
\label{teo 2.1}The three-dimensional Euclidean space $E^{3}$ is identified
with the space of spatial quaternions $\left \{  p\in Q_{H}\text{ }\left \vert
\text{ }p+\gamma p=0\right.  \right \}  $ in an obvious manner. Let $I=\left[
0,1\right]  $ denotes the unit interval of the real line $%
%TCIMACRO{\U{211d} }%
%BeginExpansion
\mathbb{R}
%EndExpansion
.$ Let
\[
\alpha:I\subset%
%TCIMACRO{\U{211d} }%
%BeginExpansion
\mathbb{R}
%EndExpansion
\longrightarrow Q_{H}%
\]%
\[
s\longrightarrow \alpha(s)=\underset{i=1}{\overset{3}{\sum}}\alpha
_{i}(s)\overrightarrow{e_{i}},\text{ }1\leq i\leq3.
\]
be an arc-lenghted curve with nonzero curvatures $\left \{  k,\text{
}r\right \}  $ and $\left \{  t(s),\text{ }n(s),\text{ }b(s)\right \}  $ denotes
the Frenet frame of the curve $\alpha$. Then Frenet formulas are given by%
\begin{equation}
\left[
\begin{array}
[c]{c}%
t^{^{\prime}}\\
n^{^{\prime}}\\
b^{\prime}%
\end{array}
\right]  =\left[
\begin{array}
[c]{ccc}%
0 & k & 0\\
-k & 0 & r\\
0 & -r & 0
\end{array}
\right]  \left[
\begin{array}
[c]{c}%
t\\
n\\
b
\end{array}
\right]  \label{2-4}%
\end{equation}
where $k$ is the principal curvature, $r$ is torsion of $\alpha$ \cite{nag}$.$
\end{theorem}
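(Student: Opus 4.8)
The plan is to construct the Frenet frame directly from the arc-length parametrized curve and then read off the Frenet equations by differentiating the orthonormality relations, exactly as in the classical real case, but keeping track of the quaternion product structure \eqref{2-2}.

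First I would set $t(s)=\alpha'(s)$. Since $\alpha$ takes values in the spatial quaternions (its expansion involves only $\overrightarrow{e_{1}},\overrightarrow{e_{2}},\overrightarrow{e_{3}}$) and is arc-lengthed, $t$ is a unit spatial quaternion, i.e.\ $h(t,t)=\left\Vert t\right\Vert^{2}=1$. Differentiating this relation gives $h(t',t)=0$, so $t'$ is orthogonal to $t$. I then define the principal curvature $k=\left\Vert t'\right\Vert=\left\Vert\alpha''\right\Vert$ and, since $k$ is assumed nonzero, the principal normal $n(s)=t'(s)/k$, a unit spatial quaternion orthogonal to $t$. By construction this already produces the first Frenet equation $t'=kn$.

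Next I would produce the binormal using the quaternion product. Setting $b=t\times n$ and applying the product formula \eqref{2-2} to the spatial quaternions $t,n$ (whose scalar parts vanish), one obtains $b=-\langle t,n\rangle+t\wedge n=t\wedge n$, because $t$ and $n$ are orthogonal. Thus $b$ is again a spatial quaternion of unit norm, orthogonal to both $t$ and $n$, so that $\{t,n,b\}$ is an orthonormal frame of spatial quaternions along $\alpha$. This identification is the one place where the quaternionic setting intervenes beyond the classical argument, and I expect the main point to check is precisely that the scalar part $-\langle t,n\rangle$ of this product vanishes; it does so immediately from the orthogonality of $t$ and $n$.

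Finally I would obtain the remaining two equations by differentiating the six orthonormality relations $h(t,t)=h(n,n)=h(b,b)=1$ and $h(t,n)=h(t,b)=h(n,b)=0$, which force the matrix of the frame derivatives to be skew-symmetric. Expanding $n'$ and $b'$ in the basis $\{t,n,b\}$: the relations $h(n,n)=1$ and $h(b,b)=1$ kill the diagonal coefficients; $h(t,n)=0$ together with $t'=kn$ forces the $t$-component of $n'$ to be $-k$; and $h(t,b)=0$ with $t'=kn$ forces the $t$-component of $b'$ to vanish. Defining the torsion $r=h(n',b)$, the relation $h(n,b)=0$ then yields the $n$-component of $b'$ as $-r$, giving $n'=-kt+rb$ and $b'=-rn$. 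Assembling the three equations produces the matrix form \eqref{2-4}, completing the proof.
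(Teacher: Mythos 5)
Your proof is correct, but there is nothing in the paper to compare it against: Theorem \ref{teo 2.1} is stated in the Preliminaries as a quoted result, attributed to Bharathi and Nagaraj \cite{nag}, and the paper gives no proof of it. What you have written is the standard construction, and it is essentially the argument of the cited source: take $t=\alpha'$, $n=t'/k$, define $b=t\times n$ via the quaternion product \eqref{2-2}, observe that the scalar part $-\langle t,n\rangle$ vanishes by orthogonality so that $b$ is again spatial and the quaternion product reduces to the vector cross product, then read off the coefficient matrix from skew-symmetry of the derivatives of the orthonormality relations. Two small points worth making explicit if you polish this: first, you use $h$ and $\langle\cdot,\cdot\rangle$ interchangeably on spatial quaternions, which is justified because for $p+\gamma p=q+\gamma q=0$ one computes $h(p,q)=\tfrac{1}{2}\left[p\times\gamma q+q\times\gamma p\right]=\langle \overrightarrow{V_{p}},\overrightarrow{V_{q}}\rangle$ (the wedge terms cancel by antisymmetry); second, defining $n=t'/k$ requires not just $k\neq 0$ but smoothness of $k$, which is implicit in the theorem's hypothesis of nonzero curvatures on an arc-lengthed (smooth) curve. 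Neither is a gap so much as an unstated verification.
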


\begin{theorem}
\label{teo 2.2}The four-dimensional Euclidean spaces $E^{4}$ is identified
with the space of quaternions. Let $I=\left[  0,1\right]  $ denotes the unit
interval of the real line $%
%TCIMACRO{\U{211d} }%
%BeginExpansion
\mathbb{R}
%EndExpansion
.$ Let
\[
\alpha^{\left(  4\right)  }:I\subset%
%TCIMACRO{\U{211d} }%
%BeginExpansion
\mathbb{R}
%EndExpansion
\longrightarrow Q_{H}%
\]%
\[
s\longrightarrow \alpha^{\left(  4\right)  }(s)=\underset{i=1}{\overset{4}%
{\sum}}\alpha_{i}(s)\overrightarrow{e_{i}},\text{ }1\leq i\leq4,\text{
}\overrightarrow{e_{4}}=1.
\]
be a smooth curve in $E^{4}$ with nonzero curvatures $\left \{  K,\text{
}k,\text{ }r-K\right \}  $ and $\left \{  T(s),\text{ }N(s),\text{ }%
B_{1}(s),\text{ }B_{2}(s)\right \}  $ denotes the Frenet frame of the curve
$\alpha$. Then Frenet formulas are given by%
\begin{equation}
\left[
\begin{array}
[c]{c}%
T^{^{\prime}}\\
N^{^{\prime}}\\
B_{1}^{\prime}\\
B_{2}^{\prime}%
\end{array}
\right]  =\left[
\begin{array}
[c]{cccc}%
0 & K & 0 & 0\\
-K & 0 & k & 0\\
0 & -k & 0 & (r-K)\\
0 & 0 & -(r-K) & 0
\end{array}
\right]  \left[
\begin{array}
[c]{c}%
T\\
N\\
B_{1}\\
B_{2}%
\end{array}
\right]  \label{2-5}%
\end{equation}
where $K$ is the principal curvature, $k$ is the torsion and $(r-K)$ is
bitorsion of $\alpha^{\left(  4\right)  }$ \cite{nag}$.$
\end{theorem}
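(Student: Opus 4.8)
The plan is to construct the Frenet frame $\{T, N, B_1, B_2\}$ by the usual orthonormalization of successive derivatives of $\alpha^{(4)}$, carried out with respect to the bilinear form $h$ of the preliminaries, and then to read off the coefficient matrix by differentiating the orthonormality relations. First I would fix the arc-length parametrization, so that the tangent $T(s) = (\alpha^{(4)})'(s)$ is a unit quaternion, i.e.\ $h(T, T) = 1$ by (\ref{2-3}). Differentiating $h(T, T) = 1$ gives $h(T', T) = 0$, so $T'$ is $h$-orthogonal to $T$; one then sets $K(s) = \left\Vert T'(s)\right\Vert$ (the principal curvature, assumed nonzero) and defines the principal normal by $N = T'/K$, which yields the first Frenet equation $T' = KN$.

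The second step is the inductive construction of $B_1$ and $B_2$. Differentiating $h(N,N)=1$ and $h(N,T)=0$ and substituting $T'=KN$ shows that $N'$ has $h$-component $-K$ along $T$ and is otherwise $h$-orthogonal to both $T$ and $N$; calling the length of the remaining part $k$ (the torsion) and the corresponding unit quaternion $B_1$, one obtains $N' = -KT + kB_1$. Repeating the same differentiation argument on the relations $h(B_1, T) = h(B_1, N) = 0$ and $h(B_1, B_1) = 1$ produces $B_1' = -kN + (r-K)B_2$ for a uniquely determined unit quaternion $B_2$ completing the frame and a scalar $(r-K)$ (the bitorsion). The general mechanism is that differentiating $h(X_i, X_j) = \delta_{ij}$ forces the coefficient matrix to be skew-symmetric, while the orthonormalization guarantees $X_i'$ lies in the span of $X_1, \dots, X_{i+1}$; together these two facts pin the matrix down to the tridiagonal, zero-diagonal shape displayed in (\ref{2-5}), and the last equation $B_2' = -(r-K)B_1$ follows from skew-symmetry alone.

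The genuinely quaternionic input---and the step I expect to be the main obstacle---is identifying the coefficients with the quaternion-theoretic quantities $\{K, k, r-K\}$ rather than treating them as three a priori independent curvatures. Here I would invoke the correspondence, via the product formula (\ref{2-2}) and the Hamiltonian conjugation $\gamma$, between the $E^4$ curve $\alpha^{(4)}$ and an associated spatial quaternionic curve in $E^3$ whose Serret--Frenet apparatus is governed by Theorem \ref{teo 2.1} and whose torsion is the scalar $r$. Writing the $E^4$ frame vectors as quaternion products of the $E^3$ Frenet vectors with the tangent and differentiating those products by means of (\ref{2-2}) and the multiplication table (\ref{2-1}) is what produces the torsion $r$ of the spatial curve in combination with the principal curvature $K$, accounting for the specific form $r-K$ of the bitorsion. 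The bookkeeping in this last computation is where the care lies; once it is carried out, the displayed Frenet matrix (\ref{2-5}) follows.
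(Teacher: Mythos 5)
A preliminary remark: the paper itself contains no proof of Theorem \ref{teo 2.2} --- it is a quoted preliminary, cited to \cite{nag} --- so your attempt can only be measured against the argument of that source, which is indeed the route you sketch. Your first two paragraphs (orthonormalization of derivatives, differentiation of the relations $h(X_i,X_j)=\delta_{ij}$) are the standard Frenet argument in $E^{4}$ and are fine as far as they go, but by themselves they only produce \emph{some} three curvature functions, say $K$, $k_2$, $k_3$, together with a skew, tridiagonal coefficient matrix. At that stage writing the third curvature as ``$r-K$'' is empty notation: the actual content of the theorem is that the torsion and bitorsion of $\alpha^{(4)}$ equal $k$ and $r-K$, where $k$ and $r$ are the curvature and torsion of a spatial quaternionic curve governed by Theorem \ref{teo 2.1}, and $K$ is the principal curvature of $\alpha^{(4)}$ itself.

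That identification is precisely the step you defer to ``bookkeeping,'' and your proposal never constructs the object on which it rests, namely the associated spatial curve. Concretely: since the scalar part of $p\times\gamma q$ equals $h(p,q)$, the orthogonality relations $h(N,T)=h(B_{1},T)=h(B_{2},T)=0$ show that $t=N\times\gamma T$, $n=B_{1}\times\gamma T$, $b=B_{2}\times\gamma T$ are \emph{spatial} quaternions; differentiating these products by means of \eqref{2-2}, using $T\times\gamma T=N\times\gamma N=1$ from \eqref{2-3} and $T'=KN$, one finds $t'=k_{2}\,n$, $n'=-k_{2}\,t+(k_{3}+K)\,b$, $b'=-(k_{3}+K)\,n$. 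Comparing with \eqref{2-4}, the triple $\{t,n,b\}$ is the Frenet frame of a spatial quaternionic curve with curvature $k=k_{2}$ and torsion $r=k_{3}+K$, whence $k_{3}=r-K$, which gives \eqref{2-5}. Note also the direction of the construction: the spatial curve is manufactured out of the frame of $\alpha^{(4)}$ (by right multiplication with $\gamma T$), not given beforehand as your wording (``quaternion products of the $E^{3}$ Frenet vectors with the tangent'') suggests. Without this construction and computation, the one feature separating Theorem \ref{teo 2.2} from the generic Frenet equations in $E^{4}$ --- the specific form $r-K$ of the bitorsion --- remains unproved; so, although your outline points at the correct route (the same one as \cite{nag}), the genuinely quaternionic step is missing rather than merely tedious.
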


\section{Characterizations of the Quaternionic Mannheim curve}

In this section, we define a quaternionic Mannheim curve\textbf{\ }and we give
some characterizations of them.in Euclidean $3$ and $4$ space.

\begin{definition}
\label{tan 3.1}Let $\alpha \left(  s\right)  $ and $\beta \left(  s^{\ast
}\right)  $ be two spatial quaternionic curves in $\mathbb{E}^{3}.$ $\left \{
t(s),\text{ }n(s),\text{ }b(s)\right \}  $ and $\left \{  t^{\ast}(s^{\ast
}),\text{ }n^{\ast}(s^{\ast}),\text{ }b^{\ast}(s^{\ast})\right \}  $ are Frenet
frames,respectively, on these curves. $\alpha \left(  s\right)  $ and
$\beta \left(  s^{\ast}\right)  $ are called spatial quaternionic Mannheim
curves if $n(s)$ and $b^{\ast}(s^{\ast})$ are linearly dependent.
\end{definition}

\begin{theorem}
\label{teo 3.1}Let $\alpha:I\subset \mathbb{R}\rightarrow$ $\mathbb{E}^{3}$ be
a spatial quaternionic Mannheim curve with the arc lenght parameter $s$ and
$\beta:I\subset \mathbb{R}\rightarrow \mathbb{E}^{3}$ be spatial quaternionic
Mannheim partner curve of $\alpha$ with the arc lenght parameter $s^{\ast}$.
Then
\[
d\left(  \alpha \left(  s\right)  ,\beta \left(  s^{\ast}\right)  \right)
=\text{constant, \  \ for all }s\in I
\]

\end{theorem}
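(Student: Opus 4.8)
The plan is to exploit the Mannheim condition to write the partner curve as a normal offset of $\alpha$, and then to show that this offset is constant. Since $\alpha$ and $\beta$ are spatial quaternionic Mannheim partner curves, the principal normal line of $\alpha$ at $\alpha(s)$ coincides with the binormal line of $\beta$ at the corresponding point $\beta(s^{\ast})$. In particular, the connecting vector $\beta(s^{\ast})-\alpha(s)$ must point along the common direction $n(s)$, so there exists a smooth function $\lambda(s)$ with
\[
\beta(s^{\ast})=\alpha(s)+\lambda(s)\,n(s).
\]
Because $n(s)$ is a unit spatial quaternion, the distance is $d\left(\alpha(s),\beta(s^{\ast})\right)=\left\Vert \lambda(s)\,n(s)\right\Vert =\left\vert \lambda(s)\right\vert$, so it suffices to prove that $\lambda$ is constant.

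First I would differentiate this relation with respect to the arc length parameter $s$ of $\alpha$, using $\alpha^{\prime}(s)=t(s)$ and the Frenet equation \eqref{2-4}, namely $n^{\prime}(s)=-k\,t(s)+r\,b(s)$. Recalling that $s^{\ast}$ is a function of $s$ under the Mannheim correspondence, the chain rule gives $\frac{d}{ds}\beta(s^{\ast})=t^{\ast}(s^{\ast})\,\frac{ds^{\ast}}{ds}$, and hence
\[
t^{\ast}(s^{\ast})\,\frac{ds^{\ast}}{ds}=\left(1-\lambda k\right)t(s)+\lambda^{\prime}(s)\,n(s)+\lambda r\,b(s).
\]

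The key geometric input is Definition~\ref{tan 3.1}: the vectors $n(s)$ and $b^{\ast}(s^{\ast})$ are linearly dependent. Since $\left\{t^{\ast},n^{\ast},b^{\ast}\right\}$ is an orthonormal Frenet frame for $\beta$, the tangent $t^{\ast}(s^{\ast})$ is orthogonal to $b^{\ast}(s^{\ast})$ and therefore orthogonal to $n(s)$. Taking the inner product $\langle\,\cdot\,,n(s)\rangle$ of the displayed equation thus annihilates the left-hand side, while on the right-hand side the orthonormality of $\left\{t,n,b\right\}$ isolates the single term $\lambda^{\prime}(s)$. This forces $\lambda^{\prime}(s)=0$, so $\lambda$ is a constant and the distance is constant for all $s\in I$.

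The main obstacle is justifying the offset representation $\beta=\alpha+\lambda n$ rigorously, that is, confirming that the coincidence of the principal normal line of $\alpha$ with the binormal line of $\beta$ genuinely forces the connecting vector to lie along $n(s)$, rather than merely guaranteeing that the two direction vectors are parallel. Once this is secured, the remainder is routine Frenet bookkeeping, with the orthogonality $t^{\ast}\perp n$ carrying the essential weight of the argument.
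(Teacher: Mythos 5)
Your proof is correct and takes essentially the same approach as the paper: you write the connecting vector along the common normal/binormal direction, differentiate, and project onto $n(s)$ to isolate $\lambda^{\prime}=0$, the only (cosmetic) difference being that you offset $\beta$ from $\alpha$ along $n$ and differentiate in $s$, whereas the paper offsets $\alpha$ from $\beta$ along $b^{\ast}$ and differentiates in $s^{\ast}$. The subtlety you flag at the end --- that Definition \ref{tan 3.1} literally asserts only linear dependence of the vectors $n(s)$ and $b^{\ast}(s^{\ast})$, not coincidence of the lines --- applies equally to the paper's own proof, which assumes the offset representation without further justification, so your argument is no weaker than the original.
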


\begin{proof}
From Definition $\left(  \text{\ref{tan 3.1}}\right)  $, we can write
\begin{equation}
\alpha \left(  s\right)  =\beta \left(  s^{\ast}\right)  +\lambda \left(
s^{\ast}\right)  b\left(  s^{\ast}\right)  \label{3-1}%
\end{equation}
Differentiating the \ Eq. \eqref{3-1} with respect to $s^{\ast}$ and by using
the Frenet equation$,$ we get
\[
\frac{d\alpha \left(  s\right)  }{ds}\frac{ds}{ds^{\ast}}=t^{\ast}(s^{\ast
})+\lambda^{\shortmid}(s^{\ast})b^{\ast}(s^{\ast})-\lambda^{\ast}(s^{\ast
})r^{\ast}(s^{\ast})n^{\ast}(s^{\ast})
\]
If we denote $\frac{d\alpha \left(  s\right)  }{ds}=t(s)$%
\[
t(s)=\frac{ds^{\ast}}{ds}\left[  t^{\ast}(s^{\ast})+\lambda^{\shortmid
}(s^{\ast})b^{\ast}(s^{\ast})-\lambda^{\ast}(s^{\ast})r^{\ast}(s^{\ast
})n^{\ast}(s^{\ast})\right]
\]
and
\[
h\left(  t(s),n\left(  s\right)  \right)  =\frac{ds^{\ast}}{ds}\left[
\begin{array}
[c]{c}%
h\left(  t^{\ast}(s^{\ast}),n\left(  s\right)  \right)  +\lambda^{\ast \prime
}\left(  s\right)  h\left(  b^{\ast}(s^{\ast}),n\left(  s\right)  \right) \\
-\lambda^{\ast}(s^{\ast})r^{\ast}(s^{\ast})h\left(  n^{\ast}(s^{\ast
}),n\left(  s\right)  \right)
\end{array}
\right]
\]
Since $\left \{  n\left(  s\right)  ,b^{\ast}(s^{\ast})\right \}  $ is a
linearly dependent set, we get
\[
\lambda^{\ast \prime}\left(  s\right)  =0
\]
that is, $\lambda^{\ast}$ is constant function on $I.$ This completes the proof.
\end{proof}

\begin{theorem}
\label{teo 3.2} Let $\left \{  \alpha,\beta \right \}  $ be a Mannheim curve
couple in $\mathbb{E}^{3}.$Then measure of the angle between the tangent
vector fields of spatial quaternionic curves $\alpha(s)$ and $\beta(s^{\ast})$
is constant.
\end{theorem}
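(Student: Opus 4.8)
The plan is to work entirely in the Frenet frame of $\beta$ and to track the single scalar $\cos\theta(s) := h(t(s), t^*(s^*))$, where $\theta(s)$ is the angle between the two tangent fields; the goal is to show $\theta'(s) \equiv 0$. The starting observation is geometric: by Definition \ref{tan 3.1} the vectors $n(s)$ and $b^*(s^*)$ are linearly dependent, so $t(s)$, being orthogonal to $n(s)$, is orthogonal to $b^*(s^*)$ as well. Hence $t(s)$ lies in the plane $\mathrm{span}\{t^*(s^*), n^*(s^*)\}$, and since it is a unit vector I may write
\begin{equation}
t(s) = \cos\theta(s)\, t^*(s^*) + \sin\theta(s)\, n^*(s^*). \label{plan-1}
\end{equation}

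Next I would differentiate \eqref{plan-1} with respect to $s$ and feed in the Frenet apparatus of Theorem \ref{teo 2.1}. For $\alpha$ this gives $t'(s) = k(s)\, n(s)$, which is parallel to $b^*(s^*)$; for $\beta$ the chain rule yields $\frac{d}{ds}t^*(s^*) = \frac{ds^*}{ds} k^* n^*$ and $\frac{d}{ds}n^*(s^*) = \frac{ds^*}{ds}(-k^* t^* + r^* b^*)$. Collecting terms, the right-hand side of the differentiated identity splits into a component inside $\mathrm{span}\{t^*, n^*\}$ and a component along $b^*$. Since the left-hand side $k\,n$ points purely along $b^*$, the $\mathrm{span}\{t^*, n^*\}$-component must vanish identically, and reading off the coefficients of $t^*$ and $n^*$ separately should produce a single scalar equation controlling $\theta'(s)$.

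The decisive — and hardest — step is precisely the evaluation of that equation. The two coefficient conditions both collapse to one relation linking $\theta'(s)$ to the curvature $k^*$ of the partner curve, and making it yield $\theta'(s)=0$ is where the constancy of $\lambda$ established in Theorem \ref{teo 3.1} must be brought to bear: using $t(s) = \frac{ds^*}{ds}\big[t^*(s^*) - \lambda\, r^*(s^*)\, n^*(s^*)\big]$ with $\lambda$ constant, one finds $\cos\theta = \frac{ds^*}{ds}$ and $\sin\theta = -\lambda r^* \frac{ds^*}{ds}$, so that $\tan\theta = -\lambda r^*$. I expect the main obstacle to be showing that this last expression is genuinely constant along the curve; I would try to close it by combining the norm relation $\left(\frac{ds^*}{ds}\right)^2(1+\lambda^2 (r^*)^2)=1$ with the Mannheim characterization of the curvatures of $\beta$, and I anticipate that controlling the torsion $r^*$ of the partner curve is the real crux on which the whole statement turns.
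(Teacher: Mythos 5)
The gap you flag in your final paragraph is not a technicality you failed to push through --- it is the point where the statement itself collapses, so no completion of your plan can exist. Carry your own plan to the end: differentiating $t=\cos\theta\,t^{\ast}+\sin\theta\,n^{\ast}$ with respect to $s^{\ast}$ and using that $\frac{dt}{ds^{\ast}}=kn\frac{ds}{ds^{\ast}}$ is parallel to $b^{\ast}$, the components along $t^{\ast}$ and $n^{\ast}$ must vanish, which gives
\[
\left(k^{\ast}+\frac{d\theta}{ds^{\ast}}\right)\sin\theta=0,
\qquad
\left(k^{\ast}+\frac{d\theta}{ds^{\ast}}\right)\cos\theta=0,
\]
hence $\frac{d\theta}{ds^{\ast}}=-k^{\ast}$; this is exactly Eq. \eqref{3-6}, which the paper itself derives inside the proof of Theorem \ref{teo 3.3}. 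Since the quaternionic Frenet setup of Theorem \ref{teo 2.1} assumes nonvanishing curvature $k^{\ast}$, the angle is strictly monotone, never constant. Your alternative route gives the same verdict: $\tan\theta=-\lambda r^{\ast}$ combined with the partner characterization $\frac{dr^{\ast}}{ds^{\ast}}=\frac{k^{\ast}}{\lambda}\bigl(1+\lambda^{2}(r^{\ast})^{2}\bigr)$ of Theorem \ref{teo 3.3} (its $\mu$ is your $\lambda$) yields $\sec^{2}\theta\,\frac{d\theta}{ds^{\ast}}=-k^{\ast}\bigl(1+\tan^{2}\theta\bigr)$, i.e. $\frac{d\theta}{ds^{\ast}}=-k^{\ast}$ again. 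From the $\alpha$ side the failure is concrete: with $\beta=\alpha+\lambda n$ ($\lambda$ constant) and the Mannheim relation $k=\lambda(k^{2}+r^{2})$ of Theorem \ref{teo 3.2.1}, one computes $(1-\lambda k)^{2}+\lambda^{2}r^{2}=1-\lambda k$, so $\cos\theta=\sqrt{1-\lambda k(s)}$, which is non-constant whenever $k$ is; and if $k$ were constant then $r$ would be too, and $\beta''=\left[k-\lambda(k^{2}+r^{2})\right]n=0$, i.e. the partner degenerates to a straight line with $k^{\ast}=0$.

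For comparison, the paper's own proof of Theorem \ref{teo 3.2} ``succeeds'' only by asserting, without justification, that
\[
h\left(kn,t^{\ast}\right)+h\left(t,\,k^{\ast}n^{\ast}\frac{ds^{\ast}}{ds}\right)=0.
\]
The first summand does vanish (because $n\parallel b^{\ast}\perp t^{\ast}$), but the second equals $k^{\ast}\frac{ds^{\ast}}{ds}\,h(t,n^{\ast})=k^{\ast}\frac{ds^{\ast}}{ds}\sin\theta$ --- precisely the term your decomposition makes visible --- and it vanishes only if $\theta\equiv0$ or $k^{\ast}\equiv0$, both degenerate. So your approach is the right instrument, and it is essentially the computation the paper performs in Theorem \ref{teo 3.3}; executed honestly it refutes Theorem \ref{teo 3.2} rather than proving it (indeed Theorem \ref{teo 3.2} contradicts the paper's own Eq. \eqref{3-6}). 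The true statement your computation produces is $\frac{d\theta}{ds^{\ast}}=-k^{\ast}$: constancy of the angle between tangents is a property of Bertrand couples, not of Mannheim couples.
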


\begin{proof}
Let $\alpha:I\subset \mathbb{R}\rightarrow$ $\mathbb{E}^{3}$ and $\beta
:I\subset \mathbb{R}\rightarrow$ $\mathbb{E}^{3}$ be spatial quaternionic
curves with arc-length $s$ and $s^{\ast}$ respectively. We show that
\begin{equation}
h\left(  t(s),t^{\ast}(s^{\ast})\right)  =\cos \theta=\text{constant}
\label{3-2}%
\end{equation}
Differentiating Eq. \eqref{3-2} with respect to $s,$ we get
\begin{align*}
\frac{d}{ds}h\left(  t(s),t^{\ast}(s^{\ast})\right)   &  =h\left(
\frac{dt(s)}{ds},t^{\ast}(s^{\ast})\right)  +h\left(  t(s),\frac{dt^{\ast
}(s^{\ast})}{ds^{\ast}}\frac{ds^{\ast}}{ds}\right) \\
&  =h\left(  k(s)n(s),t^{\ast}(s^{\ast})\right)  +h\left(  t(s),k^{\ast
}(s^{\ast})n^{\ast}(s^{\ast})\frac{ds^{\ast}}{ds}\right) \\
&  =0
\end{align*}
Thus,%
\[
h\left(  t(s),t^{\ast}(s^{\ast})\right)  =\text{constant}%
\]

\end{proof}

\begin{theorem}
\label{teo 3.2.1}Let $\alpha:I\subset \mathbb{R}\rightarrow$ $\mathbb{E}^{3}$
be spatial quaternionic curves with the arc-length parameter $s$. Then
$\alpha$ is spatial quaternionic Mannheim curve if and only if
\[
k(s)=\lambda \left(  k^{2}(s)+r^{2}(s)\right)
\]
where $\lambda_{1},$ $\lambda_{2}$ are constants.
\end{theorem}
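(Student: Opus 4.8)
The plan is to establish the Mannheim condition $k(s)=\lambda\left(k^{2}(s)+r^{2}(s)\right)$ by exploiting the relation $\alpha(s)=\beta(s^{\ast})+\lambda\,b^{\ast}(s^{\ast})$ from Theorem \ref{teo 3.1}, where I now know $\lambda$ is a genuine constant. The central idea is to decompose the tangent $t(s)$ of $\alpha$ into the Frenet frame of $\beta$. First I would differentiate the position relation with respect to $s^{\ast}$ and apply the Frenet equations \eqref{2-4} for $\beta$, obtaining
\[
t(s)\frac{ds}{ds^{\ast}}=t^{\ast}(s^{\ast})-\lambda r^{\ast}(s^{\ast})n^{\ast}(s^{\ast}),
\]
since $\lambda'=0$ kills the $b^{\ast}$ term. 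This expresses $t(s)$ as a combination of $t^{\ast}$ and $n^{\ast}$, with no $b^{\ast}$ component, which is the analytic content of the Mannheim linear-dependence assumption.

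Next I would bring in the constant-angle result of Theorem \ref{teo 3.2}: writing $h(t(s),t^{\ast}(s^{\ast}))=\cos\theta$ and $h(t(s),n^{\ast}(s^{\ast}))=\sin\theta$ for a fixed angle $\theta$, I can read off from the displayed decomposition that
\[
\cos\theta=\frac{ds^{\ast}}{ds},\qquad \sin\theta=-\lambda r^{\ast}(s^{\ast})\frac{ds^{\ast}}{ds},
\]
so that $\lambda r^{\ast}(s^{\ast})=-\tan\theta$ is constant. The complementary step is to differentiate $t(s)=\cos\theta\,t^{\ast}+\sin\theta\,(-n^{\ast})$ once more (now with respect to $s$, chain-ruling through $s^{\ast}$) and substitute the Frenet formulas for $t^{\ast\prime}$ and $n^{\ast\prime}$. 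Comparing the coefficient of $t^{\ast}$ and of $b^{\ast}$ in the resulting expression for $t'(s)=k(s)n(s)$ against the known frame relation between $\{n(s),b^{\ast}(s^{\ast})\}$ should produce an algebraic identity tying $k(s)$ to $k^{\ast},r^{\ast}$ and the constant $\theta$.

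The reverse implication requires constructing, from a curve $\alpha$ satisfying the curvature identity, a partner curve $\beta$ via $\beta(s)=\alpha(s)-\lambda\,b(s)$ (offsetting along the binormal of $\alpha$) and verifying that its principal normal lines up with the binormal of $\alpha$; here I would differentiate this candidate, compute the Frenet apparatus of $\beta$ using \eqref{2-4}, and check the linear-dependence condition of Definition \ref{tan 3.1} holds precisely when $k=\lambda(k^{2}+r^{2})$.

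The main obstacle I anticipate is the bookkeeping across two different arc-length parameters: every derivative of a $\beta$-quantity taken with respect to $s$ carries a factor $ds^{\ast}/ds=\cos\theta$, and one must be careful that quantities written as functions of $s^{\ast}$ are correctly pulled back under the bijection before comparing coefficients. A secondary subtlety is that the theorem statement mentions two constants $\lambda_{1},\lambda_{2}$ but the displayed formula uses a single $\lambda$; I would treat $\lambda$ as the one constant that actually appears (the offset from Theorem \ref{teo 3.1}), and reconcile the statement by absorbing the angle data $\cos\theta,\sin\theta$ into these constants so that the final relation $k=\lambda(k^{2}+r^{2})$ emerges cleanly.
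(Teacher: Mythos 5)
Your forward direction works in the wrong Frenet frame, and this is fatal to reaching the stated identity. The relation to be proved, $k(s)=\lambda\left(k^{2}(s)+r^{2}(s)\right)$, involves the curvature and the torsion of $\alpha$ itself; but every quantity your plan produces comes from differentiating $\alpha=\beta+\lambda b^{\ast}$ with respect to $s^{\ast}$ and expanding in $\{t^{\ast},n^{\ast},b^{\ast}\}$, so only $k^{\ast}$, $r^{\ast}$ and $\theta$ can ever appear --- the torsion $r$ of $\alpha$ never enters your computation, and you even say so (``an algebraic identity tying $k(s)$ to $k^{\ast},r^{\ast}$ and the constant $\theta$''). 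That route is essentially the paper's proof of Theorem \ref{teo 3.3}, the characterization in terms of the \emph{partner's} curvatures, $\frac{dr^{\ast}}{ds^{\ast}}=\frac{k^{\ast}}{\mu}\left(1+\mu^{2}r^{\ast 2}\right)$, not of Theorem \ref{teo 3.2.1}. The paper's proof of the present theorem instead writes the partner as $\beta(s)=\alpha(s)+\lambda(s)n(s)$ and differentiates twice in $\alpha$'s own frame, obtaining $\frac{dt^{\ast}}{ds^{\ast}}$ as a multiple of $\left[-\lambda k^{\prime}t+\left(k-\lambda k^{2}-\lambda r^{2}\right)n+\lambda r^{\prime}b\right]$ plus a multiple of $t^{\ast}$; since $\frac{dt^{\ast}}{ds^{\ast}}=k^{\ast}n^{\ast}$ is orthogonal to $b^{\ast}\parallel n$, the coefficient of $n$ must vanish, which is exactly $k=\lambda(k^{2}+r^{2})$. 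Moreover, your reliance on Theorem \ref{teo 3.2} backfires: if $\theta$ were constant, differentiating $t=\cos\theta\,t^{\ast}-\sin\theta\,n^{\ast}$ gives $kn=\frac{ds^{\ast}}{ds}\left(k^{\ast}\sin\theta\,t^{\ast}+k^{\ast}\cos\theta\,n^{\ast}-r^{\ast}\sin\theta\,b^{\ast}\right)$, and $n\parallel b^{\ast}$ then forces $k^{\ast}\sin\theta=k^{\ast}\cos\theta=0$, i.e.\ $k^{\ast}\equiv 0$, contradicting the nonzero-curvature hypothesis (indeed the paper's own Theorem \ref{teo 3.3} derives $d\theta/ds^{\ast}=-k^{\ast}$, so the angle is not constant in this computation); your intermediate conclusion that $\lambda r^{\ast}=-\tan\theta$ is constant degenerates for the same reason.

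Your converse is also set up incorrectly. The Mannheim partner must lie on the principal normal line of $\alpha$ (which coincides with the binormal line of $\beta$), so the candidate curve is $\beta(s)=\alpha(s)+\lambda n(s)$, and what must be verified is that the \emph{binormal} of this $\beta$ is parallel to $n$. You instead offset along the binormal of $\alpha$, $\beta=\alpha-\lambda b$, and propose to check that the principal normal of $\beta$ lines up with $b$; that is the condition for $\beta$ to be a Mannheim curve with partner $\alpha$ --- the roles reversed --- and it is not what Definition \ref{tan 3.1} requires here. For the correct construction and verification (via cross products of the first two derivatives), compare the converse half of the paper's proof of Theorem \ref{teo 3.3}.
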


\begin{proof}
If $\alpha$ is spatial quaternionic Mannheim curve, we can write
\[
\beta \left(  s\right)  =\alpha \left(  s\right)  +\lambda(s)n\left(  s\right)
\]
Differentiating the above equality and by using the Frenet equations, we get
\[
\frac{d\beta(s)}{ds}=\left[  \left(  1-\lambda(s)k\left(  s\right)  \right)
t(s)+\lambda^{\shortmid}(s)n(s)+\lambda(s)r\left(  s\right)  b\left(
s\right)  \right]
\]
as $\left \{  n\left(  s\right)  ,b^{\ast}(s^{\ast})\right \}  $ is a linearly
dependent set, we get
\[
\lambda^{\shortmid}(s)=0.
\]
This means that $\lambda$ is constant. Thus we have%
\[
\frac{d\beta(s)}{ds}=\left(  1-\lambda k\left(  s\right)  \right)
t(s)+\lambda r\left(  s\right)  b\left(  s\right)  .
\]
On the other hand, we have%
\[
t^{\ast}=\frac{d\beta}{ds}\frac{ds}{ds^{\ast}}=\left[  \left(  1-\lambda
k\left(  s\right)  \right)  t(s)+\lambda r\left(  s\right)  b\left(  s\right)
\right]  \frac{ds}{ds^{\ast}}.
\]
By taking the derivative of this equation with respect to $s^{\ast}$ and
appliying the Frenet formulas we obtain%
\begin{align*}
\frac{dt^{\ast}}{ds}\frac{ds}{ds^{\ast}}  &  =\left[  -\lambda k^{\shortmid
}(s)t(s)+\left(  k(s)-\lambda k^{2}(s)-\lambda r^{2}(s)\right)  n(s)+\lambda
r^{\shortmid}(s)b(s)\right]  \left(  \frac{ds}{ds^{\ast}}\right)  ^{2}\\
&  +\left[  \left(  1-\lambda k\left(  s\right)  \right)  t(s)+\lambda
r\left(  s\right)  b\left(  s\right)  \right]  \frac{d^{2}s}{ds^{\ast^{2}}}%
\end{align*}
From this equation we get%
\[
k(s)=\lambda \left(  k^{2}(s)+r^{2}(s)\right)  .
\]
Conversely, if $k(s)=\lambda \left(  k^{2}(s)+r^{2}(s)\right)  ,$ then we can
easily see that $\alpha$ is a Mannheim curve.
\end{proof}

\begin{theorem}
\label{teo 3.3}Let $\alpha:I\subset \mathbb{R}\rightarrow$ $\mathbb{E}^{3}$ be
spatial quaternionic Mannheim curve with arc-length parameter $s$.$\ $Then
$\beta$ is the spatial quaternionic Mannheim partner curve of $\alpha$ if and
only if the curvature functions $k^{\ast}\left(  s^{\ast}\right)  $ and
$r^{\ast}(s^{\ast})$ of $\beta$ satisfy the following equation%
\[
\frac{dr^{\ast}}{ds^{\ast}}=\frac{k^{\ast}}{\mu}\left(  1+\mu^{2}r^{\ast^{2}%
}\right)  ,
\]
where $\mu$ is constant.
\end{theorem}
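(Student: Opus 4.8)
The plan is to exploit the constant-distance property of Theorem \ref{teo 3.1} together with the Frenet relations of $\beta$, using the angle between the two tangent fields as an auxiliary function. First I would record the Mannheim relation in the form $\alpha(s)=\beta(s^{\ast})+\mu\,b^{\ast}(s^{\ast})$ with $\mu$ constant, which is exactly the content of Theorem \ref{teo 3.1} ($\mu$ being the constant distance). Differentiating with respect to $s^{\ast}$ and applying $\tfrac{db^{\ast}}{ds^{\ast}}=-r^{\ast}n^{\ast}$ gives
\[
t(s)\,\frac{ds}{ds^{\ast}}=t^{\ast}(s^{\ast})-\mu\,r^{\ast}(s^{\ast})\,n^{\ast}(s^{\ast}),
\]
showing that $t$ lies in the $\{t^{\ast},n^{\ast}\}$-plane, consistently with the Mannheim condition $n\parallel b^{\ast}$ (so $t\perp b^{\ast}$).

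Next I would introduce $\theta=\theta(s^{\ast})$ by $\cos\theta=h(t,t^{\ast})$. Taking $h(\,\cdot\,,t^{\ast})$ in the identity above yields $\tfrac{ds}{ds^{\ast}}\cos\theta=1$, i.e. $\tfrac{ds}{ds^{\ast}}=\sec\theta$, and hence
\[
t=\cos\theta\,t^{\ast}-\sin\theta\,n^{\ast}.
\]
Comparing the $n^{\ast}$-component of this expression with that of $\cos\theta\,(t^{\ast}-\mu r^{\ast}n^{\ast})$ produces the first key relation $\tan\theta=\mu\,r^{\ast}$ (equivalently $\sec^{2}\theta=1+\mu^{2}(r^{\ast})^{2}$), up to a choice of orientation for $b^{\ast}$ relative to $n$.

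The crux is a second relation for $\theta$. For this I would differentiate $t=\cos\theta\,t^{\ast}-\sin\theta\,n^{\ast}$ with respect to $s^{\ast}$, using the Frenet equations of $\beta$ on the right, while on the left $\tfrac{dt}{ds^{\ast}}=k\,n\,\tfrac{ds}{ds^{\ast}}=k\,\tfrac{ds}{ds^{\ast}}\,b^{\ast}$ is purely along $b^{\ast}$. Matching the $t^{\ast}$- and $n^{\ast}$-components, both of which must therefore vanish, forces
\[
\frac{d\theta}{ds^{\ast}}=k^{\ast}.
\]
Differentiating $\tan\theta=\mu r^{\ast}$ and substituting $\tfrac{d\theta}{ds^{\ast}}=k^{\ast}$ together with $\sec^{2}\theta=1+\mu^{2}(r^{\ast})^{2}$ gives $\sec^{2}\theta\,k^{\ast}=\mu\,\tfrac{dr^{\ast}}{ds^{\ast}}$, which rearranges to the asserted equation $\tfrac{dr^{\ast}}{ds^{\ast}}=\tfrac{k^{\ast}}{\mu}\bigl(1+\mu^{2}(r^{\ast})^{2}\bigr)$. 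For the converse I would reverse the construction: integrate $\theta'=k^{\ast}$, check via the ODE that $\tan\theta=\mu r^{\ast}$ is consistent, set $\alpha=\beta+\mu b^{\ast}$, and verify directly that the principal normal of $\alpha$ is parallel to $b^{\ast}$.

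I expect the main obstacle to be the bookkeeping of $\theta$: unlike the Bertrand-type situation, here $\theta$ is genuinely a function of $s^{\ast}$, and the decisive—easily overlooked—step is extracting $\tfrac{d\theta}{ds^{\ast}}=k^{\ast}$ from the vanishing of the tangential components after differentiating the frame relation. Careful tracking of the sign conventions (the orientation of $b^{\ast}$ relative to $n$ and the sign of $\mu$) is what reconciles the computation with the $+$ sign appearing in the stated formula.
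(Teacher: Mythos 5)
Your forward implication is, up to sign conventions, exactly the paper's argument: differentiate $\alpha=\beta+\mu b^{\ast}$ to get $t\,\frac{ds}{ds^{\ast}}=t^{\ast}-\mu r^{\ast}n^{\ast}$, introduce the angle $\theta$ between the tangents, force the $t^{\ast}$- and $n^{\ast}$-components of $dt/ds^{\ast}$ to vanish (since that derivative is along $b^{\ast}$) to obtain $d\theta/ds^{\ast}=\pm k^{\ast}$, combine with $\mu r^{\ast}=\pm\tan\theta$, and differentiate; the paper's orientation gives $d\theta/ds^{\ast}=-k^{\ast}$ and $\mu r^{\ast}=-\tan\theta$, yours gives both with $+$, and since only the product matters the final ODE agrees. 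Where you genuinely diverge is the converse. The paper differentiates $\alpha=\beta+\mu b^{\ast}$ twice and takes two successive cross products, using the hypothesis in the form $k^{\ast}-\mu\,\frac{dr^{\ast}}{ds^{\ast}}+\mu^{2}k^{\ast}r^{\ast^{2}}=0$, arriving at $kn\left(\frac{ds}{ds^{\ast}}\right)^{4}=-\mu r^{\ast^{2}}\left(1+\mu^{2}r^{\ast^{2}}\right)b^{\ast}$, which exhibits $n\parallel b^{\ast}$ directly. You instead propose to integrate $\theta'=k^{\ast}$ and deduce $\tan\theta=\mu r^{\ast}$ from the ODE; this does work, and then the unit tangent of $\alpha=\beta+\mu b^{\ast}$ is $\cos\theta\,t^{\ast}-\sin\theta\,n^{\ast}$, whose $s^{\ast}$-derivative collapses to $-r^{\ast}\sin\theta\,b^{\ast}$, giving $n\parallel b^{\ast}$. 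Your route avoids the cross-product computation entirely, at the cost of an ODE-uniqueness step your sketch leaves implicit: you must fix the initial condition $\tan\theta(s_{0}^{\ast})=\mu r^{\ast}(s_{0}^{\ast})$ and observe that $f=\tan\theta-\mu r^{\ast}$ satisfies $f'=k^{\ast}\left(\tan\theta+\mu r^{\ast}\right)f$ by the assumed curvature relation, hence $f\equiv 0$; that is the only place the hypothesis enters, so this step cannot be waved through. With that spelled out, both arguments are complete and of comparable length; the paper's is more computational but self-contained, yours isolates more clearly why the curvature relation is exactly the integrability condition for the Mannheim configuration.
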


\begin{proof}
Let $\alpha:I\subset \mathbb{R}\rightarrow$ $\mathbb{E}^{3}$ be spatial
quaternionic Mannheim curve. Then, we can write
\begin{equation}
\alpha \left(  s^{\ast}\right)  =\beta \left(  s^{\ast}\right)  +\mu \left(
s^{\ast}\right)  b^{\ast}\left(  s^{\ast}\right)  \label{3-3}%
\end{equation}
for some function $\mu \left(  s^{\ast}\right)  $. By taking the derivative of
this equation with respect to $s^{\ast}$\ and using the Frenet equations we
obtain%
\[
t\frac{ds}{ds^{\ast}}=t^{\ast}\left(  s^{\ast}\right)  +\mu^{\shortmid}\left(
s^{\ast}\right)  b^{\ast}\left(  s^{\ast}\right)  -\mu \left(  s^{\ast}\right)
r^{\ast}\left(  s^{\ast}\right)  n^{\ast}\left(  s^{\ast}\right)  .
\]
And then, we know that $\left \{  n\left(  s\right)  ,b^{\ast}\left(  s^{\ast
}\right)  \right \}  $ is a linearly dependent set, so we have
\[
\mu^{\shortmid}\left(  s^{\ast}\right)  =0.
\]
This means that $\mu \left(  s^{\ast}\right)  $ is a constant function. Thus we
have%
\begin{equation}
t\frac{ds}{ds^{\ast}}=t^{\ast}\left(  s^{\ast}\right)  -\mu r^{\ast}\left(
s^{\ast}\right)  n^{\ast}\left(  s^{\ast}\right)  . \label{3-4}%
\end{equation}
On the other hand, we have%
\begin{equation}
t=t^{\ast}\cos \theta+n^{\ast}\sin \theta \label{3-5}%
\end{equation}
where $\theta$ is the angle between $t$ and $t^{\ast}$ at the corresponding
points of $\alpha$ and $\beta.$ By taking the derivative of this equation with
respect to $\overline{s}$ and using the Frenet equations we obtain%
\[
kn\frac{ds}{ds^{\ast}}=-\left(  k^{\ast}+\frac{d\theta}{ds^{\ast}}\right)
\sin \theta t^{\ast}+\left(  k^{\ast}+\frac{d\theta}{ds^{\ast}}\right)
\cos \theta n^{\ast}+r^{\ast}\sin \theta b^{\ast}.
\]
From this equation and the fact that the $\left \{  n\left(  s\right)
,b^{\ast}\left(  s^{\ast}\right)  \right \}  $ is a linearly dependent set, we
get%
\[
\left \{
\begin{array}
[c]{c}%
\left(  k^{\ast}+\frac{d\theta}{ds^{\ast}}\right)  \sin \theta=0\\
\left(  k^{\ast}+\frac{d\theta}{ds^{\ast}}\right)  \cos \theta=0.
\end{array}
\right.
\]
For this reason we have%
\begin{equation}
\frac{d\theta}{ds^{\ast}}=-k^{\ast}. \label{3-6}%
\end{equation}
From the Eq. \eqref{3-4} and Eq. \eqref{3-5} and notice that $t^{\ast}$ is
orthogonal to $b^{\ast}$, we find that%
\[
\frac{ds}{ds^{\ast}}=\frac{1}{\cos \theta}=-\frac{\mu r^{\ast}}{\sin \theta}.
\]
Then we have%
\[
\mu r^{\ast}=-\tan \theta.
\]
By taking the derivative of this equation and applying Eq. $(3.6)$, we get%
\[
\mu \frac{dr^{\ast}}{ds^{\ast}}=k^{\ast}\left(  1+\mu^{2}r^{\ast^{2}}\right)
\]
that is%
\[
\frac{dr^{\ast}}{ds^{\ast}}=\frac{k^{\ast}}{\mu}\left(  1+\mu^{2}r^{\ast^{2}%
}\right)  .
\]
Conversely, if the curvature $k^{\ast}$ and torsion $r^{\ast}$ of the curve
$\beta$ satisfy%
\[
\frac{dr^{\ast}}{ds^{\ast}}=\frac{k^{\ast}}{\mu}\left(  1+\mu^{2}r^{\ast^{2}%
}\right)
\]
for constant $\mu,$ then we define a curve a curve by%
\begin{equation}
\alpha \left(  s^{\ast}\right)  =\beta \left(  s^{\ast}\right)  +\mu b^{\ast
}\left(  s^{\ast}\right)  \label{3-7}%
\end{equation}
and we will show that $\alpha$ is a\ spatial quaternionic Mannheim curve and
$\beta$ is the spatial quaternionic partner curve of $\alpha$. By taking the
derivative of Eq. $(3.7)$ with respect to $\overline{s}$ twice, we get%
\begin{equation}
t\frac{ds}{ds^{\ast}}=t^{\ast}-\mu r^{\ast}n^{\ast}, \label{3-8}%
\end{equation}%
\begin{equation}
kn\left(  \frac{ds}{ds^{\ast}}\right)  ^{2}+t\frac{d^{2}s}{ds^{\ast^{2}}}=\mu
k^{\ast}r^{\ast}t^{\ast}+\left(  k^{\ast}-\mu \frac{dr^{\ast}}{ds^{\ast}%
}\right)  n^{\ast}-\mu r^{\ast^{2}}b^{\ast} \label{3-9}%
\end{equation}
respectively. Taking the cross product of Eq. \eqref{3-8} with Eq. \eqref{3-9}
and noticing that%
\[
k^{\ast}-\mu \frac{dr^{\ast}}{ds^{\ast}}+\mu^{2}k^{\ast}r^{\ast^{2}}=0,
\]
we have%
\begin{equation}
kb\left(  \frac{ds}{ds^{\ast}}\right)  ^{3}=\mu^{2}r^{\ast^{3}}t^{\ast}+\mu
r^{\ast^{2}}n^{\ast}. \label{3-10}%
\end{equation}
By taking the cross product of Eq. \eqref{3-8} with Eq. \eqref{3-10}, we get%
\[
kn\left(  \frac{ds}{ds^{\ast}}\right)  ^{4}=-\mu r^{\ast^{2}}\left(  1+\mu
^{2}r^{\ast^{2}}\right)  b^{\ast}.
\]
This means that the principal normal vector field of the spatial quaternionic
curve $\alpha$ and binormal vector field of the spatial quaternionic curve
$\beta$ are linearly dependent set. And so $\alpha$ is a spatial quaternionic
Mannheim curve and $\beta$ is spatial quaternionic Mannheim partner curve of
$\alpha$.
\end{proof}

\begin{definition}
\label{tan 3.2}A quaternionic curve $\alpha^{\left(  4\right)  }%
:I\subset \mathbb{R}\rightarrow$ $\mathbb{E}^{4}$ is a quaternionic Mannheim
curve if there exists a quaternionic curve $\beta^{\left(  4\right)
}:\overline{I}\subset \mathbb{R}\rightarrow$ $\mathbb{E}^{4}$ such that the
second Frenet vector at each point of $\alpha^{\left(  4\right)  }$ is
included the plane generated by the third Frenet vector and the fourth Frenet
vector of $\beta^{\left(  4\right)  }$ at corresponding point under $\varphi,$
where $\varphi$ is a bijection from $\alpha^{\left(  4\right)  }$ to
$\beta^{\left(  4\right)  }$. The curve $\beta^{\left(  4\right)  }$ is called
the quaternionic Mannheim partner curve of $\alpha^{\left(  4\right)  }.$
\end{definition}

\begin{theorem}
\label{teo 3.4}Let $\alpha^{\left(  4\right)  }:I\subset \mathbb{R}\rightarrow$
$\mathbb{E}^{4}$ and $\beta^{\left(  4\right)  }:\overline{I}\subset
\mathbb{R}\rightarrow$ $\mathbb{E}^{4}$ be quaternionic Mannheim curve couple
with arc-length $s$ and $\overline{s},$ respectively$.$ Then
\begin{equation}
d\left(  \alpha^{\left(  4\right)  }\left(  s\right)  ,\beta^{\left(
4\right)  }\left(  \overline{s}\right)  \right)  =\lambda(s)=\text{constant,
\ for all }s\in I \label{3-11}%
\end{equation}

\end{theorem}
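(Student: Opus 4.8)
The plan is to mimic the three-dimensional argument of Theorem \ref{teo 3.1} together with the construction used in the proof of Theorem \ref{teo 3.2.1}, replacing the single normal direction by the second Frenet vector $N$ of $\alpha^{(4)}$. By Definition \ref{tan 3.2}, the vector $N(s)$ lies in the plane $\mathrm{span}\{B_{1}^{*}(\bar s),B_{2}^{*}(\bar s)\}$ spanned by the third and fourth Frenet vectors of $\beta^{(4)}$. Since $\beta^{(4)}(\bar s)$ lies on the line through $\alpha^{(4)}(s)$ in the direction $N(s)$, I would first write
\[
\beta^{(4)}(\bar s)=\alpha^{(4)}(s)+\lambda(s)\,N(s)
\]
for a smooth function $\lambda$. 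Because $\{T,N,B_{1},B_{2}\}$ is orthonormal with respect to $h$, this immediately gives $d\left(\alpha^{(4)}(s),\beta^{(4)}(\bar s)\right)=\left\Vert \lambda(s)N(s)\right\Vert=|\lambda(s)|$, so the whole theorem reduces to showing that $\lambda$ is constant on $I$.

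Next I would differentiate the displayed relation with respect to $s$ and apply the Frenet formulas \eqref{2-5} to $\alpha^{(4)}$, using $N^{\prime}=-KT+kB_{1}$, to obtain an expression of the form
\[
T^{*}(\bar s)\,\frac{d\bar s}{ds}=(1-\lambda K)\,T(s)+\lambda^{\prime}(s)\,N(s)+\lambda(s)k(s)\,B_{1}(s).
\]
The decisive step is then to take the bilinear form $h(\cdot,N(s))$ of both sides. On the left-hand side the factor $h\left(T^{*}(\bar s),N(s)\right)$ appears; since $N(s)$ lies in $\mathrm{span}\{B_{1}^{*},B_{2}^{*}\}$ and this plane is $h$-orthogonal to $T^{*}(\bar s)$ in the orthonormal Frenet frame of $\beta^{(4)}$, this factor vanishes and the entire left-hand side is zero. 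On the right-hand side the orthonormality of $\{T,N,B_{1},B_{2}\}$ kills the $T$ and $B_{1}$ terms and leaves exactly $\lambda^{\prime}(s)$.

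Combining the two sides yields $\lambda^{\prime}(s)=0$, so $\lambda$ is a constant function on $I$ and therefore $d\left(\alpha^{(4)}(s),\beta^{(4)}(\bar s)\right)=|\lambda|$ is constant, as claimed. I expect the main obstacle to be bookkeeping rather than conceptual: one must treat the reparametrization $\bar s=\bar s(s)$ induced by the bijection $\varphi$ carefully so that the factor $d\bar s/ds$ is correctly carried through the differentiation. This factor, however, multiplies only the term $h(T^{*},N)$, which vanishes, so it disappears from the final identity; the essential input is simply the orthogonality $N(s)\perp T^{*}(\bar s)$ coming directly from Definition \ref{tan 3.2}.
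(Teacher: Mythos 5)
Your proposal is correct and takes essentially the same route as the paper's own proof: both write $\beta^{(4)}(\bar{s})=\alpha^{(4)}(s)+\lambda(s)N(s)$, differentiate with respect to $s$ using the Frenet formulas \eqref{2-5}, and exploit the orthogonality $h\left(\overline{T}(\psi(s)),N(s)\right)=0$ (a consequence of $N(s)$ lying in the plane of $\overline{B}_{1}$ and $\overline{B}_{2}$) to conclude $\lambda^{\prime}(s)=0$. The only difference is presentational: you make explicit the pairing of both sides with $N(s)$ under $h$, which the paper leaves implicit.
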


\begin{proof}
From the Definition $\left(  \text{\ref{tan 3.2}}\right)  $, quaternionic
Mannheim partner curve $\beta^{\left(  4\right)  }$ of \ $\alpha^{\left(
4\right)  }$ is given by the following equation%
\[
\beta^{\left(  4\right)  }\left(  s\right)  =\alpha^{\left(  4\right)
}(s)+\lambda(s)N\left(  s\right)  .
\]
where $\lambda \left(  s\right)  $ is a smooth function. A smooth function
$\psi:s\in I\rightarrow \psi \left(  s\right)  =\overline{s}\in \overline{I}$ is
defined by%
\[
\psi \left(  s\right)  =\overset{s}{\underset{0}{\int}}\left \Vert \frac
{d\alpha^{\left(  4\right)  }\left(  s\right)  }{ds}\right \Vert ds=\overline
{s}.
\]
The bijection $\varphi$:$\alpha^{\left(  4\right)  }\rightarrow \beta^{\left(
4\right)  }$ is defined by $\varphi \left(  \alpha^{\left(  4\right)
}(s)\right)  =\beta^{\left(  4\right)  }(\psi \left(  s\right)  ).$ Since the
second Frenet vector at each point of $\alpha^{\left(  4\right)  }$ is
included the plane generated by the third Frenet vector and the fourth Frenet
vector of $\beta^{\left(  4\right)  }$ at corresponding point under $\varphi,$
for each $s\in I,$ the Frenet vector $N(s)$ is given by the linear combination
of Frenet vectors $\overline{B}_{1}\left(  \psi \left(  s\right)  \right)  $
and $\overline{B}_{2}\left(  \psi \left(  s\right)  \right)  $ of
$\beta^{\left(  4\right)  },$ that is, we can write%
\[
N(s)=g(s)\overline{B}_{1}\left(  \psi \left(  s\right)  \right)  +h(s)\overline
{B}_{2}\left(  \psi \left(  s\right)  \right)  ,
\]
where $g(s)$ and $h(s)$ are smooth functions on $I.$ So we can write%
\begin{equation}
\beta^{\left(  4\right)  }\left(  \psi(s)\right)  =\alpha^{\left(  4\right)
}\left(  s\right)  +\lambda(s)N\left(  s\right)  . \label{3-12}%
\end{equation}
Differentiating Eq. \eqref{3-12} with respect to $s$ and by using the Frenet
equations, we get
\[
\overline{T}\left(  \psi \left(  s\right)  \right)  \psi^{\shortmid}(s)=\left[
\left(  1-\lambda K\left(  s\right)  \right)  T(s)+\lambda^{\prime}\left(
s\right)  N\left(  s\right)  +\lambda \left(  s\right)  k\left(  s\right)
B_{1}\left(  s\right)  \right]  .
\]
By the fact that:%
\[
h\left(  \overline{T}\left(  \varphi \left(  s\right)  \right)  ,g(s)\overline
{B}_{1}\left(  \psi \left(  s\right)  \right)  +h(s)\overline{B}_{2}\left(
\psi \left(  s\right)  \right)  \right)  =0,
\]
we have%
\[
\lambda^{\prime}\left(  s\right)  =0
\]
that is, $\lambda \left(  s\right)  $ is constant function on $I.$ This
completes the proof.
\end{proof}

\begin{theorem}
\label{teo 3.5}If the quaternionic curve $\alpha^{\left(  4\right)  }%
:I\subset \mathbb{R}\rightarrow$ $\mathbb{E}^{4}$ is a quaternionic Mannheim
curve, then the first and second curvature functions of $\alpha^{\left(
4\right)  }$ satisfy the equality:%
\[
K(s)=\lambda \left \{  K^{2}(s)+k^{2}(s)\right \}
\]
where $\lambda$ is constant.
\end{theorem}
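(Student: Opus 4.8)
The plan is to mimic the three-dimensional argument of Theorem \ref{teo 3.2.1}, differentiating the defining relation twice and then reading off the coefficient of $N(s)$ in the Frenet frame of $\alpha^{(4)}$. By Theorem \ref{teo 3.4} the Mannheim partner curve satisfies
\[
\beta^{(4)}\left(\psi(s)\right)=\alpha^{(4)}(s)+\lambda N(s)
\]
with $\lambda$ a constant. First I would differentiate this with respect to $s$ and apply the Frenet equations \eqref{2-5} for $\alpha^{(4)}$, in particular $N^{\prime}=-KT+kB_{1}$, to obtain
\[
\overline{T}\left(\psi(s)\right)\psi^{\prime}(s)=\left(1-\lambda K(s)\right)T(s)+\lambda k(s)B_{1}(s),
\]
which is exactly the first differentiation appearing in the proof of Theorem \ref{teo 3.4} once $\lambda^{\prime}=0$ has been used.

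Next I would set $u=\frac{ds}{d\overline{s}}=1/\psi^{\prime}$, so that $\overline{T}=u\left[\left(1-\lambda K\right)T+\lambda kB_{1}\right]$, and differentiate once more, using $\frac{d}{d\overline{s}}=u\frac{d}{ds}$ together with $\overline{T}^{\,\prime}=\overline{K}\,\overline{N}$ on the left-hand side. On the right, applying $T^{\prime}=KN$ and $B_{1}^{\prime}=-kN+(r-K)B_{2}$ and collecting terms gives
\[
\overline{K}\,\overline{N}=uu^{\prime}\left[\left(1-\lambda K\right)T+\lambda kB_{1}\right]+u^{2}\left[-\lambda K^{\prime}T+\left(K-\lambda K^{2}-\lambda k^{2}\right)N+\lambda k^{\prime}B_{1}+\lambda k(r-K)B_{2}\right].
\]
The only term carrying $N(s)$ on the right is $u^{2}\left(K-\lambda K^{2}-\lambda k^{2}\right)N$, since the factor $\left[\left(1-\lambda K\right)T+\lambda kB_{1}\right]$ has no $N$-component.

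The decisive step is the use of the Mannheim condition. By Definition \ref{tan 3.2} the vector $N(s)$ lies in the plane spanned by $\overline{B}_{1}$ and $\overline{B}_{2}$, that is $N=g\overline{B}_{1}+h\overline{B}_{2}$; since $\{\overline{T},\overline{N},\overline{B}_{1},\overline{B}_{2}\}$ is orthonormal this forces $h\left(\overline{N},N\right)=0$. Taking the inner product of the last identity with $N$ therefore annihilates the left-hand side, $h\left(\overline{K}\,\overline{N},N\right)=\overline{K}\,h\left(\overline{N},N\right)=0$, and leaves only $u^{2}\left(K-\lambda K^{2}-\lambda k^{2}\right)=0$. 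As $u\neq0$, this yields $K(s)=\lambda\left\{K^{2}(s)+k^{2}(s)\right\}$, as claimed. I expect the main difficulty to be organizational rather than conceptual: one must keep the two Frenet frames and the reparametrization $\overline{s}=\psi(s)$ carefully separated through the chain rule, and recognize that the containment of $N$ in the $\{\overline{B}_{1},\overline{B}_{2}\}$-plane is precisely the orthogonality $N\perp\overline{N}$ that kills every term except the one producing the desired curvature relation.
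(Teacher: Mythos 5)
Your proposal is correct and takes essentially the same route as the paper: differentiate the partner relation $\beta^{(4)}(\psi(s))=\alpha^{(4)}(s)+\lambda N(s)$ twice, then use the Mannheim condition $N=g\overline{B}_{1}+h\overline{B}_{2}$, which gives $h\left(\overline{N},N\right)=0$, to conclude that the coefficient of $N(s)$, namely $K-\lambda K^{2}-\lambda k^{2}$, must vanish. The only difference is notational — you carry $u=1/\psi^{\prime}$ through the chain rule, whereas the paper divides by $\psi^{\prime}(s)=\sqrt{\left(1-\lambda K(s)\right)^{2}+\left(\lambda k(s)\right)^{2}}$ and differentiates the resulting quotients — so nothing substantive differs.
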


\begin{proof}
Let $\beta^{\left(  4\right)  }$ be a quaternionic Mannheim partner curve of
$\alpha^{\left(  4\right)  }.$ Then we can write%
\[
\beta^{\left(  4\right)  }\left(  \psi \left(  s\right)  \right)
=\alpha^{\left(  4\right)  }\left(  s\right)  +\lambda N\left(  s\right)
\]
Differentiating, we get
\[
\overline{T}\left(  \psi \left(  s\right)  \right)  \psi^{\shortmid}(s)=\left[
\left(  1-\lambda K\left(  s\right)  \right)  T(s)+\lambda k\left(  s\right)
B_{1}\left(  s\right)  \right]  ,
\]
taht is,%
\[
\overline{T}\left(  \psi \left(  s\right)  \right)  =\frac{1-\lambda K\left(
s\right)  }{\psi^{\shortmid}(s)}T(s)+\frac{\lambda k\left(  s\right)  }%
{\psi^{\shortmid}(s)}B_{1}\left(  s\right)
\]
where $\psi^{\shortmid}(s)=\sqrt{\left(  1-\lambda K(s)\right)  ^{2}+\left(
\lambda k(s)\right)  ^{2}}$ for $s\in I.$ By differentiation of both sides of
the above equality with respect to $s$, we have%
\begin{align*}
\psi^{\shortmid}(s)\overline{K}\left(  \overline{s}\right)  \overline
{N}\left(  \overline{s}\right)   &  =\left(  \frac{1-\lambda K\left(
s\right)  }{\psi^{\shortmid}(s)}\right)  ^{\shortmid}T(s)\\
&  +\left(  \frac{\left(  1-\lambda K\left(  s\right)  \right)  K(s)-\lambda
k\left(  s\right)  ^{2}}{\psi^{\shortmid}(s)}\right)  N(s)\\
&  +\left(  \frac{\lambda k\left(  s\right)  }{\psi^{\shortmid}(s)}\right)
^{\shortmid}B_{1}\left(  s\right)  -\frac{\lambda k\left(  s\right)  \left(
r(s)-K(s)\right)  }{\psi^{\shortmid}(s)}B_{2}\left(  s\right)  .
\end{align*}
By the fact:%
\[
h\left(  \overline{N}\left(  \varphi \left(  s\right)  \right)  ,g(s)\overline
{B}_{1}\left(  \psi \left(  s\right)  \right)  +h(s)\overline{B}_{2}\left(
\psi \left(  s\right)  \right)  \right)  =0,
\]
we have that coefficient of $N$ in the above equation is zero, that is,%
\[
\left(  1-\lambda K\left(  s\right)  \right)  K(s)-\lambda k\left(  s\right)
^{2}=0.
\]
Thus, we have%
\[
K(s)=\lambda \left \{  K^{2}(s)+k^{2}(s)\right \}
\]
for $s\in I$. This completes the proof.
\end{proof}

\begin{theorem}
\label{teo 3.6}Let $\alpha^{\left(  4\right)  }:I\subset \mathbb{R}\rightarrow$
$\mathbb{E}^{4}$ be quaternionic curven with arc-length $s$ whose curvature
functions $K(s)$ and $k(s)$ are non-constant functions and satisfy the
equality: $K(s)=\lambda \left \{  K^{2}(s)+k^{2}(s)\right \}  ,$ where $\lambda$
is constant. If the quatern\i onic curve $\beta^{\left(  4\right)  }$ is given
by $\beta^{\left(  4\right)  }\left(  \overline{s}\right)  =\alpha^{\left(
4\right)  }\left(  s\right)  +\lambda N\left(  s\right)  ,$ then
$\alpha^{\left(  4\right)  }$ is a quaternionic Mannheim curve and
$\beta^{\left(  4\right)  }$ is the quaternionic Mannheim partner curve of
$\alpha^{\left(  4\right)  }$.
\end{theorem}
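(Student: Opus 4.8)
The plan is to verify Definition~\ref{tan 3.2} directly: I will show that the second Frenet vector $N(s)$ of $\alpha^{\left(  4\right)  }$ lies in the plane spanned by the third and fourth Frenet vectors $\overline{B}_{1},\overline{B}_{2}$ of $\beta^{\left(  4\right)  }$. Since the Frenet frame $\left\{  \overline{T},\overline{N},\overline{B}_{1},\overline{B}_{2}\right\}  $ is orthonormal, this is equivalent to showing $h\left(  N,\overline{T}\right)  =0$ and $h\left(  N,\overline{N}\right)  =0$.

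First I would differentiate the defining relation $\beta^{\left(  4\right)  }\left(  \psi(s)\right)  =\alpha^{\left(  4\right)  }\left(  s\right)  +\lambda N\left(  s\right)  $ with respect to $s$ and apply the Frenet formulas \eqref{2-5}. Using $N^{\shortmid}=-KT+kB_{1}$, this gives
\[
\overline{T}\,\psi^{\shortmid}(s)=\left(  1-\lambda K(s)\right)  T(s)+\lambda k(s)B_{1}(s),
\]
so that $\overline{T}$ lies in the span of $T$ and $B_{1}$, with $\psi^{\shortmid}(s)=\sqrt{\left(  1-\lambda K\right)  ^{2}+\left(  \lambda k\right)  ^{2}}$. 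Since $N$ is orthogonal to both $T$ and $B_{1}$, we immediately obtain $h\left(  N,\overline{T}\right)  =0$.

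Next I would differentiate once more, exactly as in the proof of Theorem~\ref{teo 3.5}, to expand $\psi^{\shortmid}\overline{K}\,\overline{N}$ in the frame $\left\{  T,N,B_{1},B_{2}\right\}  $. The coefficient of $N$ in that expansion is $\left(  1-\lambda K\right)  K-\lambda k^{2}$, which the hypothesis $K=\lambda\left\{  K^{2}+k^{2}\right\}  $ forces to vanish. Hence $\overline{N}$ is a linear combination of $T$, $B_{1}$, $B_{2}$ only, and since $N$ is orthogonal to each of these, $h\left(  N,\overline{N}\right)  =0$.

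Having established $h\left(  N,\overline{T}\right)  =0$ and $h\left(  N,\overline{N}\right)  =0$, and since $\left\{  \overline{T},\overline{N},\overline{B}_{1},\overline{B}_{2}\right\}  $ is an orthonormal basis of $\mathbb{E}^{4}$, the vector $N$ must lie in the orthogonal complement of the plane $\left\{  \overline{T},\overline{N}\right\}  $, namely the plane spanned by $\overline{B}_{1}$ and $\overline{B}_{2}$. By Definition~\ref{tan 3.2} this is precisely the condition that $\alpha^{\left(  4\right)  }$ be a quaternionic Mannheim curve with $\beta^{\left(  4\right)  }$ as its partner. The main point requiring care is the second-derivative computation, and in particular invoking the hypothesis exactly at the step that annihilates the $N$-component; the non-constancy of $K$ and $k$ ensures that $\beta^{\left(  4\right)  }$ has nonvanishing curvatures, so that its Frenet frame $\left\{  \overline{T},\overline{N},\overline{B}_{1},\overline{B}_{2}\right\}  $ is genuinely defined.
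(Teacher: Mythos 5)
Your proposal is correct and follows essentially the same route as the paper: differentiate the defining relation $\beta^{\left(4\right)}\left(\psi(s)\right)=\alpha^{\left(4\right)}\left(s\right)+\lambda N\left(s\right)$ twice, use the hypothesis $K=\lambda\left\{K^{2}+k^{2}\right\}$ to annihilate the $N$-coefficient in the expansion of $\overline{N}$, and conclude that $N$ lies in the span of $\overline{B}_{1}$ and $\overline{B}_{2}$. The only cosmetic differences are that the paper also uses the hypothesis to simplify $\psi^{\shortmid}(s)$ to $\sqrt{1-\lambda K(s)}$ before the second differentiation, whereas you keep the general form, and that your closing orthogonality argument ($h\left(N,\overline{T}\right)=0$ and $h\left(N,\overline{N}\right)=0$ imply $N\in\mathrm{span}\left\{\overline{B}_{1},\overline{B}_{2}\right\}$) makes explicit a step the paper states only loosely.
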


\begin{proof}
Let $\overline{s}$ be the arc-length of the quaternionic curve $\beta^{\left(
4\right)  }$. That is, $\overline{s}$ is defined by%
\[
\overline{s}=\overset{s}{\underset{0}{\int}}\left \Vert \frac{d\alpha^{\left(
4\right)  }\left(  s\right)  }{ds}\right \Vert ds
\]
for $s\in I$. We can write a smooth function $\psi:s\in I\rightarrow
\psi \left(  s\right)  =\overline{s}\in \overline{I}$. By the assumption of the
curvature functions $K(s)$ and $k(s)$, we have%
\begin{align*}
\psi^{\shortmid}(s)  &  =\sqrt{\left(  1-\lambda K(s)\right)  ^{2}+\left(
\lambda k(s)\right)  ^{2}},\\
\psi^{\shortmid}(s)  &  =\sqrt{1-\lambda K(s)}%
\end{align*}
for $s\in I$. Then we can easily write%
\begin{align*}
\beta^{\left(  4\right)  }\left(  \overline{s}\right)   &  =\beta^{\left(
4\right)  }\left(  \psi \left(  s\right)  \right) \\
&  =\alpha^{\left(  4\right)  }\left(  s\right)  +\lambda N\left(  s\right)
\end{align*}
for the quaternionic curve $\beta^{\left(  4\right)  }$. If we differentiate
both sides of the above equality with respect to $s,$ we get%
\[
\psi^{\shortmid}(s)\overline{T}\left(  \psi(s)\right)  =T(s)+\lambda \left \{
-K(s)T(s)+k(s)B_{1}(s)\right \}  .
\]
And so we have,%
\begin{equation}
\overline{T}\left(  \psi(s)\right)  =\sqrt{1-\lambda K(s)}T(s)+\frac{\lambda
k(s)}{\sqrt{1-\lambda K(s)}}B_{1}(s). \label{3-13}%
\end{equation}
Differentiating the above equality with respect to $s$ and by using the Frenet
equations, we get%
\begin{align*}
\psi^{\shortmid}(s)\overline{K}\left(  \psi(s)\right)  \overline{N}\left(
\psi(s)\right)   &  =\left(  \sqrt{1-\lambda K(s)}\right)  ^{\shortmid
}T(s)+\left(  \frac{K(s)\left(  1-\lambda K(s)\right)  -\lambda k^{2}%
(s)}{\sqrt{1-\lambda K(s)}}\right)  N(s)\\
&  +\left(  \frac{\lambda k(s)}{\sqrt{1-\lambda K(s)}}\right)  ^{\shortmid
}B_{1}(s)+\frac{\lambda k(s)\left(  r(s)-K(s)\right)  }{\sqrt{1-\lambda K(s)}%
}B_{2}(s)
\end{align*}
From our assumption, it holds%
\[
\frac{K(s)\left(  1-\lambda K(s)\right)  -\lambda k^{2}(s)}{\sqrt{1-\lambda
K(s)}}=0.
\]
We find the coefficient of $N(s)$ in the above equality vanishes. Thus the
vector $\overline{N}\left(  \psi(s)\right)  $ is given by linear combination
of $T(s),$ $B_{1}(s)$ and $B_{2}(s)$ for each $s\in I.$ And the vector
$\overline{T}\left(  \psi(s)\right)  $ is given by linear combination of
$T(s)$ and $B_{1}(s)$ for each $s\in I$ in the Eq. \eqref{3-13}. As the curve
$\beta^{\left(  4\right)  }$ is quaternionic curve in $\mathbb{E}^{4}$, the
vector $N(s)$ is given by linear combination of $\overline{B}_{1}\left(
\overline{s}\right)  $ and $\overline{B}_{2}\left(  \overline{s}\right)  $.
For this reason, the second Frenet curve at each point of $\alpha^{(4)}$ is
included in the plane generated the third Frenet vector and the fourth Frenet
vector of $\beta^{\left(  4\right)  }$ at corresponding point under $\varphi$.
Here the bijection $\varphi:\alpha^{\left(  4\right)  }\rightarrow
\beta^{\left(  4\right)  }$ is defined by $\varphi \left(  \alpha^{\left(
4\right)  }(s)\right)  =\beta^{\left(  4\right)  }(\psi \left(  s\right)  )$.
This completes the proof.
\end{proof}


\begin{thebibliography}{9}                                                                                                %


\bibitem {coken}A. C. \c{C}\"{o}ken, A. Tuna, On the quaternionic inclined
curves in the semi-Euclidean space $E_{2}^{4}$, Appl. Math. Comput., 155
(2004), 373-389.

\bibitem {saglam}D. Sa\u{g}lam, On the Osculating Spheres of a Real
Quaternionic Curve in the Euclidean Space $E^{4}$, International J.Math.
Combin., Vol.3(2012), 46-53.

\bibitem {ferda}F. Kahraman, \.{I}. G\"{o}k, H. H. Hac\i saliho\u{g}lu, On the
Quaternionic $B_{2}$ Slant helices In the semi-Euclidean Space $E_{2}^{4},$
Appl. Math. Comput., 218 (2012) 6391--6400.

\bibitem {liu}H., Liu, and F., Wang, Mannheim partner curves in 3-space, J.
Geom., 88 (2008) 120 -- 126.

\bibitem {matsuda}H. Matsuda and S. Yorozu, On Generalized Mannheim Curves in
Euclidean 4-Space, Nihonkai Math. J., Vol.20 (2009), 33--56.

\bibitem {gok1}\.{I}. G\"{o}k, O.Z. Okuyucu, F. Kahraman, H. H. Hac\i
saliho\u{g}lu. On the Quaternionic $B_{2}-$ Slant Helices in the Euclidean
Space $E^{4}.$ Adv. Appl. Cliff ord Algebras, DOI 10.1007/s00006-011-0284-6.

\bibitem {nag}K. Bharathi, M. Nagaraj, Quaternion valued function of a real
Serret-Frenet formulae, Indian J. Pure Appl. Math. 16 (1985) 741-756.

\bibitem {gungor}M. A. G\"{u}ng\"{o}r, M. Tosun, Some characterizations of
quaternionic rectifying curves, Diff. Geom. Dyn. Syst.,Vol.13,
(2011), 89-100.
\end{thebibliography}
\end{document}